\documentclass{amsart}
\usepackage{amsmath}
  \usepackage{paralist}



\newcommand{\cW}{\mathcal{W}}
\newcommand{\cT}{\mathcal{T}}

\newcommand{\mR}{\mathbb{R}}

\newcommand{\tr}{\operatorname{tr}}

\newtheorem{theorem}{Theorem}[section]
\newtheorem{corollary}{Corollary}

\newtheorem{lemma}[theorem]{Lemma}

\theoremstyle{definition}
\newtheorem{definition}[theorem]{Definition}
\newtheorem{remark}{Remark}

\title[Nonlinear thermoelasticity]
      {Maximal regularity and global existence of solutions
  to a quasilinear  thermoelastic plate  system }

\author[Irena Lasiecka and Mathias Wilke]{}

\subjclass[2010]{Primary: 74F05; Secondary: 35B30, 35B40, 74H40.}
 \keywords{Quasilinear thermoelastic plates,
existence and uniqueness of strong solutions, maximal regularity, exponential decay.}

 \email{il2v@virginia.edu}
 \email{mathias.wilke@mathematik.uni-halle.de}

\thanks{The research of I. Lasiecka has been partially supported by DMS-NSF
Grant Nr 0606882. M. Wilke expresses his thanks for hospitality to the Department for Mathematics at the University of Virginia.}

\begin{document}
\maketitle

\centerline{\scshape Irena Lasiecka }
\medskip
{\footnotesize
 \centerline{Department of Mathematics}
   \centerline{University of Virginia}
   \centerline{Charlottesville, VA 22903, USA}
} 

\medskip

\centerline{\scshape Mathias Wilke}
\medskip
{\footnotesize
 \centerline{ Institut f\"{u}r Mathematik}
   \centerline{Martin-Luther-Universit\"{a}t Halle-Wittenberg}
   \centerline{06099 Halle, Germany}
}

\bigskip

 \centerline{\emph{Dedicated to Jerry Goldstein on the occasion of his 70th birthday.}}

\begin{abstract}
We consider a quasilinear PDE system which models nonlinear vibrations
of a thermoelastic plate defined on a bounded domain in $\mR^n$. Well-posedness  of  solutions reconstructing  maximal parabolic regularity   in  nonlinear
thermoelastic plate  is established.
 In addition, exponential decay rates for strong solutions are also shown.
\end{abstract}

\begin{section}{Introduction}

  In this paper we study the existence and exponential stability of
  solutions to a quasilinear system arising in the modeling of
  nonlinear thermoelastic plates. The mathematical analysis of
  thermoelastic systems has attracted a lot of attention over the
  years.  An array of new and fundamental results in the area of
  wellposedness and stability of solutions to both linear and
  nonlinear thermoelasticity have been contributed to the field (see
  \cite{dafermos, dafermos1, racke, DenRac06, DenShiRac08, RivRac95, RivRac96} and
  references therein).

The focus of this paper is on {\it thermoelastic plates} and
associated uniform stability issues.  This particular class of
problems has received considerable attention in recent years,
particularly in the context of some new developments in control
theory. Questions such as exponential stability, controllability,
observability, unique continuation have been asked and partially
answered for both linear and nonlinear plates (see \cite{lagnese1} and
references therein). While there is at present  a vast literature dealing with well-posedness and
stability of linear  and semilinear thermoelastic equations (see above),
the treatment of quasilinear and fully nonlinear models defined on
multidimensional domains is much more  subtle and requires different
mathematical approaches.
 This paper deals with global and smooth
solutions defined for small initial data.

The equations we consider arise from a model that takes into account
the coupling between elastic, magnetic and thermal fields in a
{\em nonlinear} elastic plate model (see \cite{AmbBelMin83},
\cite{Bag99}, \cite{Ily48}, \cite{Lib75}, \cite{HasHovSasSte04}). In non-dimensional form,
the equations we consider are given below in \eqref{eq_0.1a}-\eqref{eq_0.1c}.
The nonlinearity
arises from the nature of the magnetoelastic material, owing to a
nonlinear dependence between the intensities of the deformation and
stress. We also assume that the material nonlinearity is cubic, as in the
original plate model \cite{HasHovSasSte04}. However, the arguments provided depend neither on structure of nonlinearity nor on the order near the origin. We put this generalization in evidence by considering a more general system under the sole Assumption 1 (see below).

Let $\Omega$ be a bounded domain of $\mathbb R^n$, $n\in\mathbb{N}$, with boundary $\partial \Omega\in C^2$. Consider the system
\begin{eqnarray}
\label{eq_0.1a}
& &
\left\{
\begin{array}{rr}
W_{tt}+\Delta^{2}W-\Delta \Theta+a\Delta ((\Delta W)^{3})=0 \\
\Theta_{t} -\Delta \Theta+\Delta W_{t} =0
\end{array} \right\} \mbox{ in } \Omega \times (0, T)
\\
\nonumber
&&
\\
\label{eq_0.1b}
&&
W=\Delta W=\Theta=0 \mbox{ on } \partial \Omega \times (0, T)
 \quad \textrm{(Boundary Conditions)}
\\
\nonumber
&&
\\
\label{eq_0.1c}
&&
\left\{ \begin{array}{lll}
W(x,0)=f(x)
& (x\in \Omega);

\\
W_{t}(x,0)=g(x)
& (x\in \Omega);

\\
\Theta(x,0)=h(x)
& (x\in \Omega);

\end{array} \right\} \quad \textrm{(Initial Conditions)}.
\end{eqnarray}
We assume that the material constant $a$ is positive.

In fact,  in what follows we will be able to obtain results for a more general version of equation (\ref{eq_0.1a})
where the cubic nonlinearity is replaced by a more general nonlinear function of superlinear growth.
More specifically, we consider
\begin{eqnarray}
\label{eq_0.1a-new}
& &
\left\{
\begin{array}{rr}
W_{tt}+\Delta^{2}W-\Delta \Theta+a\Delta (\phi (\Delta W))=0 \\
\Theta_{t} -\Delta \Theta+\Delta W_{t} =0
\end{array} \right\} \mbox{ in } \Omega \times (0, T)
\\
\nonumber
&&
\end{eqnarray}
 where  the  function $\phi$  satisfies:

\noindent\textbf{Assumption 1}:
$\phi \in C^{3-}(\mathbb{R}) $, $\phi(0) = \phi'(0) = \phi'' (0) =0$.

\end{section}

\begin{section}{Main Results}
\subsection{Notation}
Let $J = (0,T) $, where $T$ may be finite or $\infty$. For $p\in (1, \infty)$
we introduce the following function spaces
\begin{itemize}
\item $W_{p,0}^1(\Omega):=\overline{C_0^\infty(\Omega)}^{W_p^1}$.
\item $X_0 \equiv [L_p(\Omega)]^3 $,  $ X_1 \equiv [W^{2}_{p}(\Omega) \cap W_{p,0}^{1}(\Omega) ]^3  $.
\item
$ L_p(J, X_0) =[L_p(J,  L_p(\Omega) )]^3$ , $ L_p(J, X_1 ) = [L_p(J, W^{2}_{p}(\Omega) \cap W_{p,0}^{1}(\Omega) )]^3 $,
\item
$W^{1}_{p} (J, X_0 ) = [W^{1}_{p}(J, L_p(\Omega) )]^3 $
\item
$ X_{p} = ( X_0, X_1)_{1- \frac{1}{p} , p }=\begin{cases}[W_p^{2(1-1/p)}(\Omega)]^3,\ &\text{if}\ 1<p<3/2,\\
\{u\in [W_p^{2(1-1/p)}(\Omega)]^3:u|_{\partial\Omega}=0\},\ &\text{if}\ p>3/2.
\end{cases}$.
\end{itemize}
For $\mu\in (1/p,1]$ we set
\begin{itemize}
\item
$L_{p,\mu}(J;X_0):= \{u:J\to X_0:[t\mapsto t^{1-\mu}u(t)]\in L_p(J;X_0)\},$.
\item
$\mathbb{E}_{0,\mu}(J):=L_{p,\mu}(J;X_0)$
\item
$\mathbb{E}_{1,\mu}(J):= W_{p,\mu}^1(J;X_0)\cap L_{p,\mu}(J;X_1)$
\item $X_{p,\mu}$ is given by
\begin{align*} X_{p,\mu}:&=(X_0,X_1)_{\mu-1/p,p}\\
&=\begin{cases}[W_p^{2(\mu-1/p)}(\Omega)]^3,\ &\text{if}\ 1<\mu p<3/2,\\
\{u\in [W_p^{2(\mu-1/p)}(\Omega)]^3:u|_{\partial\Omega}=0\},\ &\text{if}\ \mu p>3/2.
\end{cases}
\end{align*}
\end{itemize}
Given $\omega\ge 0$, then
\begin{itemize}
\item $e^{-\omega}\mathbb{E}_{j,\mu}(J):=\{u\in \mathbb{E}_{j,\mu}(J):[t\mapsto e^{\omega t} u(t)]\in \mathbb{E}_{j,\mu}(J)\}$, $j\in\{0,1\}.$
\end{itemize}
If $X$ is some Banach space, then
\begin{itemize}
\item
$ e^{-\omega}BUC(J,X)=\{u:J\to X: [t\mapsto e^{\omega t}|u(t)|_X]\ \text{is bdd.\ and unif.\ cont.}\}$.
\item $e^{-\omega}C_0(\mathbb{R}_+;X):=\{u\in e^{\omega t}BUC(\mathbb{R}_+;X):e^{\omega t}|u(t)|_{X}\to 0\ \text{as}\ t\to\infty\}$.
\end{itemize}

\subsection{Formulation of the result}
\begin{theorem}\label{th:1}
 Let $n\in\mathbb{N}$, $p > 1 + \frac{n}{2} $ and $\mu\in (\frac{n+2}{2p},1]$. Assume that $\phi$ satisfies Assumption 1. With reference to the problem (\ref{eq_0.1b})-(\ref{eq_0.1a-new})
let $$x(0):=(\Delta  W(0), W_t(0), \Theta(0) ) \in
X_{p,\mu}.$$
Then the following assertions hold.
\begin{enumerate}
\item There exists $\rho > 0 $ such that
for all $|x(0)|_{X_{p,\mu}} \leq \rho $ and
 for every   $T > 0 $ there is a unique solution
 $x(t) = ( \Delta W(t), W_t(t), \Theta(t) )
 $ of (\ref{eq_0.1b})-(\ref{eq_0.1a-new}) with maximal parabolic regularity
$$ ( \Delta W , W_t, \Theta)\in [ L_{p,\mu}(J; W^{2}_p(\Omega))]^3 \cap [ W^{1}_{p,\mu}(J, L_p(\Omega) ) ]^3 \cap [BUC (J, W^{2\mu- 2/p}_p(\Omega)
) ]^3.$$
\item If in addition $p>(n+4)/2$, $\mu\in (\frac{n+4}{4p}+\frac{1}{2},1]$ and $\phi'(s)\ge 0$ for all $s\in\mathbb{R}$, then the same conclusion holds with no restriction on the size of initial data, provided $T>0$ is sufficiently small.
\item There exists $\omega > 0 $ and a constant $C> 0$ such that for $|x(0)|_{X_{p,\mu}} \leq \rho $ the following exponential estimate holds:
 $$| x(t) |_{X_{p,\mu}} \leq C e^{-\omega t } |x(0)|_{X_{p,\mu}} ,\quad t\ge 0.$$
\item For all $\sigma>0$ there exists $\omega > 0 $ (independent of $\sigma$)  and a constant $C(\sigma) > 0$ such that for $|x(0)|_{X_{p,\mu}} \leq \rho $ the following exponential decay rate holds:
 $$| x(t) |_{X_{p}} \leq C(\sigma) e^{-\omega t } |x(0)|_{X_{p,\mu}} ,\quad t\ge \sigma,$$
 where $C(\sigma)\to\infty$ as $\sigma\to 0$.
\end{enumerate}
\end{theorem}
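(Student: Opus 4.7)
The first move is to introduce $x:=(u,v,\theta):=(\Delta W,W_t,\Theta)$ and recast the system as a first-order abstract problem on $X_0=[L_p(\Omega)]^3$ with domain $X_1=[W_p^2(\Omega)\cap W_{p,0}^1(\Omega)]^3$,
\begin{equation*}
\dot x+\mathcal{A}_0 x=F(x),\qquad x(0)=x_0,
\end{equation*}
where $\mathcal{A}_0$ is a $3\times 3$ matrix built out of the Dirichlet Laplacian and $F(x)=(0,-a\Delta\phi(u),0)^\top$. I would then invoke the known semigroup/spectral analysis of the linear thermoelastic plate to show that $\mathcal{A}_0$ generates an analytic, exponentially stable $C_0$-semigroup on $X_0$ and is $\mathcal{R}$-sectorial of angle strictly less than $\pi/2$. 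The Kalton--Weis theorem, combined with the Pr\"uss--Simonett extension to time-weighted spaces, then yields maximal $L_{p,\mu}$-regularity of $\partial_t+\mathcal{A}_0$ on every interval $J$.

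\textbf{Nonlinear estimates; parts (1) and (3).} Expanding $\Delta\phi(u)=\phi'(u)\Delta u+\phi''(u)|\nabla u|^2$, Assumption 1 forces $\phi,\phi',\phi''$ to vanish at the origin, so near $u=0$ one has $|\phi'(u)|+|\phi''(u)|\le C|u|$. The condition $\mu>(n+2)/(2p)$ delivers the embedding $X_{p,\mu}\hookrightarrow L_\infty(\Omega)$ and hence a pointwise control of $u$, while $p>1+n/2$ yields the Sobolev embedding $W_p^2(\Omega)\hookrightarrow W_{2p}^1(\Omega)$ needed to place $|\nabla u|^2$ in $L_p(\Omega)$. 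Together these estimates produce a superlinear Lipschitz bound $|F(x)-F(\bar x)|_{\mathbb{E}_{0,\mu}(J)}\le c(r)|x-\bar x|_{\mathbb{E}_{1,\mu}(J)}$ with $c(r)\to 0$ as $r\to 0$, uniformly in $T$. A Banach fixed-point argument in a small closed ball of $\mathbb{E}_{1,\mu}(J)$ then proves (1). For (3) I would redo the fixed point inside the exponentially weighted class $e^{-\omega}\mathbb{E}_{1,\mu}(\mathbb{R}_+)$ for an $\omega$ less than the spectral bound of $-\mathcal{A}_0$; the superlinear defect is absorbed uniformly in $t$, and the continuous embedding $e^{-\omega}\mathbb{E}_{1,\mu}(\mathbb{R}_+)\hookrightarrow e^{-\omega}BUC(\mathbb{R}_+;X_{p,\mu})$ produces the claimed decay rate.

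\textbf{Smoothing (part (4)), quasilinear existence (part (2)), and main obstacle.} For (4) I would exploit the instantaneous regularization of the analytic semigroup: the interpolation identity $X_{p,\mu}=(X_0,X_1)_{\mu-1/p,p}$ yields $|e^{-t\mathcal{A}_0}|_{X_{p,\mu}\to X_p}\le C t^{\mu-1}$, and the variation-of-constants formula together with the already controlled nonlinear load gives $|x(t)|_{X_p}\le C(\sigma)|x(0)|_{X_{p,\mu}}$ for $t\ge\sigma$; coupling this with (3) produces (4). For (2), under $\phi'\ge 0$ the plate equation rewrites as
\begin{equation*}
W_{tt}+(1+a\phi'(\Delta W))\Delta^2 W+a\phi''(\Delta W)|\nabla\Delta W|^2-\Delta\Theta=0,
\end{equation*}
whose principal part is uniformly strongly parabolic with coefficient bounded below by $1$. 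I would then apply the Cl\'ement--Li / Pr\"uss--Simonett quasilinear maximal-regularity theory: the strengthened bounds $p>(n+4)/2$ and $\mu>(n+4)/(4p)+1/2$ give $X_{p,\mu}\hookrightarrow W_{2p}^1(\Omega)$ and hence enough space--time H\"older regularity of the coefficient $\phi'(\Delta W)$ to run a frozen-coefficient argument, yielding a short-time solution for arbitrary initial data. The main obstacle I anticipate sits precisely at this last step: because the thermoelastic coupling is non-triangular, uniform $\mathcal{R}$-sectoriality of the frozen $3\times 3$ matrix operator with variable leading coefficient $1+a\phi'(u)$ cannot be reduced to a scalar plate perturbation and has to be established directly for the full coupled system along the quasilinear tube.
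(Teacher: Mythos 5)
Your overall architecture coincides with the paper's: the same reduction $x=(\Delta W,W_t,\Theta)$ to a first--order system with generator $A=\Delta M$, the same use of maximal $L_{p,\mu}$--regularity (Denk--Hieber--Pr\"uss for $\mathcal{R}$--sectoriality, Pr\"uss--Simonett for the time weights, a scaling $u=e^{\omega t}v$ for the exponential weight), the same splitting $\Delta\phi(u)=\phi'(u)\Delta u+\phi''(u)|\nabla u|^2$ with smallness of $\phi'$, $\phi''$ near $0$ coming from Assumption~1, a contraction in the exponentially weighted class for (1) and (3), and an appeal to the K\"ohne--Pr\"uss--Wilke quasilinear theory for (2). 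The only organizational difference is that the paper first solves a linear non--autonomous problem with frozen coefficient $V$ (Neumann series around $A$) and then runs the fixed point over $V$, while you contract directly on $F$; these are equivalent. Two of your estimates are stated too coarsely to close as written, though both are repairable along the paper's lines: for $\phi''(u)|\nabla u|^2$ you cannot put both gradient factors at the $L_{p,\mu}(J;W_p^2)$ level (that only yields $L_{p/2}$ integrability in time); the paper's Lemma~3.4 uses the asymmetric pairing $\nabla V\cdot\nabla Z$ with $V\in BUC(J;W_p^{2(\mu-1/p)})$ and $Z\in L_{p,\mu}(J;W_p^2)$, via $W_p^{2\mu-1-2/p}(\Omega)\hookrightarrow L_n(\Omega)$.

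The genuine gap is in your argument for part (4). You propose to reach $X_p=(X_0,X_1)_{1-1/p,p}$ through the variation--of--constants formula, using $\|e^{-\tau\mathcal{A}_0}\|_{X_{p,\mu}\to X_p}\le C\tau^{\mu-1}$ for the free part. But the Duhamel term requires $\|e^{-\tau\mathcal{A}_0}\|_{X_0\to X_p}\sim\tau^{-(1-1/p)}$ paired against a load $F(x)\in L_p$ in time, and $(1-1/p)\,p'=1$, so the convolution kernel is exactly non--integrable: the endpoint space $X_p$ is not reachable by this semigroup--smoothing route. This is precisely the borderline where one must use the trace--space characterization of maximal regularity rather than convolution estimates. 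The paper avoids the issue entirely by a purely function--space observation: since $t^{(1-\mu)p}\ge\sigma^{(1-\mu)p}$ on $J_\sigma=[\sigma,\infty)$, one has $|x_*|_{\mathbb{E}_{1}(J_\sigma)}\le\sigma^{\mu-1}|x_*|_{\mathbb{E}_{1,\mu}(J)}$, and then the unweighted trace embedding $e^{-\omega}\mathbb{E}_1(J_\sigma)\hookrightarrow e^{-\omega}C_0(J_\sigma;X_p)$ (with a translation--invariant constant) delivers $|x_*(t)|_{X_p}\le M_2\sigma^{\mu-1}[C(\rho_0)+c]e^{-\omega t}|x_0|_{X_{p,\mu}}$ for $t\ge\sigma$. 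You should replace your Duhamel computation by this restriction--of--weight argument; everything else you need for (4) is already contained in your proof of (3).
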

By specializing $\phi$ to $\phi(s)=s^3 $ we obtain at once
\begin{corollary}\label{c:1}
The result stated in Theorem \ref{th:1} applies to the original model (\ref{eq_0.1a}).
\end{corollary}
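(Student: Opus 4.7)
The plan is to verify that $\phi(s) = s^3$ satisfies Assumption 1 and the additional monotonicity requirement in part (2) of Theorem \ref{th:1}; then the corollary is an immediate specialization. This is a purely algebraic check rather than a substantive argument, so the plan is essentially bookkeeping.

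First I would compute $\phi'(s) = 3s^2$, $\phi''(s) = 6s$, and $\phi'''(s) = 6$. Since $\phi$ is a polynomial it belongs to $C^\infty(\mathbb{R})$, and in particular to the class $C^{3-}(\mathbb{R})$ required by Assumption 1. The vanishing conditions $\phi(0) = \phi'(0) = \phi''(0) = 0$ are read off directly from the derivatives above. This verifies Assumption 1, so parts (1), (3), and (4) of Theorem \ref{th:1} apply verbatim, yielding the existence, uniqueness, maximal regularity, and exponential decay statements for the original system \eqref{eq_0.1a}.

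Second, to obtain the stronger (large-data, short-time) conclusion of part (2), I would observe that $\phi'(s) = 3s^2 \geq 0$ for every $s \in \mathbb{R}$, which is exactly the sign condition required in assertion (2). Therefore, under the parameter restrictions $p > (n+4)/2$ and $\mu \in (\tfrac{n+4}{4p} + \tfrac{1}{2},1]$, part (2) of Theorem \ref{th:1} is also applicable to \eqref{eq_0.1a} without any smallness assumption on the initial data.

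There is no real obstacle in this step: the only thing to watch is that the cubic nonlinearity $\phi(s) = s^3$ fits the general framework used to set up Theorem \ref{th:1}, which is precisely the motivating case discussed in the introduction. Hence the corollary follows directly by invoking Theorem \ref{th:1} with this specific choice of $\phi$.
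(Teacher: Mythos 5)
Your proposal is correct and matches the paper's reasoning: the corollary is obtained simply by specializing to $\phi(s)=s^3$, checking that this polynomial satisfies Assumption 1 ($\phi\in C^\infty\subset C^{3-}$, $\phi(0)=\phi'(0)=\phi''(0)=0$) and, for assertion (2), that $\phi'(s)=3s^2\ge 0$. The paper states this "at once" without writing out the verification, so your bookkeeping is exactly the intended argument.
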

\begin{remark}
The result obtained in Theorem \ref{th:1}  uses weighted norms $X_{p, \mu}$.
For $\mu =1 $ one obtains 'classical' $L_p$ estimates.
These norms account for singularity at the origin and provide trade-off between singularity and additional fractional regularity. Taking $p \rightarrow \infty$
allows to obtain "almost" $L_{\infty}$-estimates. This is reminiscent to some of the framework introduced in \cite{lunardi,lunardi-jde}
\end{remark}

\subsection{Comments}

\begin{enumerate}
\item
It is interesting to contrast the result of Theorem \ref{th:1} with the one of  Theorem 1.3 and Theorem 1.5 of \cite{nodea} obtained for the original model (\ref{eq_0.1a})-(\ref{eq_0.1c}) within the framework of
$L_2$  theory.  More specifically, in \cite{nodea}  global existence   and exponential decay rates are shown  in
 the so called finite energy which is $[L_2(\Omega)]^3 $ for the variable $x(t) $.  There is no {\it uniqueness}  result obtained within this framework.
This, of course, raises a  familiar dilemma of discrepancy between uniqueness and globality of solutions.
It is an interesting problem that is still open to the best knowledge of the authors.
\item
Unique and "small" solutions for equations (\ref{eq_0.1a})-(\ref{eq_0.1c})  have been also obtained in \cite{nodea} within the framework of maximal regularity with
the spaces $C^1(\bar{\Omega})$. However, the above framework leads to the "loss"of incremental differentiability
with respect to the initial data. This drawback is no longer present in Theorem \ref{th:1} where the space
$ X_{p,\mu}$ is invariant under the flow.
\item
One can consider more general structure of linear matrix operator in (\ref{eq_0.1a}) as long as
it is associated with an  exponentially stable semigroup. This is to say that the  coefficients
 of matrix $M$  introduced in  (\ref{A})  may be arbitrary  as long as all eigenvalues of $M$  have positive real parts.
\end{enumerate}
We shall next address the issue of higher regularity of solutions given by Theorem \ref{th:1}.
Among other things it will be shown below that under the additional assumption that $\phi \in C^{\infty} $, the solution $x(t)$ is infinitely many times differentiable in time away from $t=0$.
 \begin{theorem}\label{thm:2}
 Under the Assumptions of Theorem  \ref{th:1}  and with $\phi \in C^{\infty} (\mathbb{R}) $
  we obtain for all $k\in\mathbb{N}$ that $x^{(k)}\in e^{-\omega}C_0^{\infty}(J_\sigma, X_{p} ) $,
  for each $\sigma>0$, where $J_\sigma=[\sigma,\infty)$.
  \\
  In addition, if $[s\mapsto\phi(s)]$ is real analytic, then $[(0,\infty)\ni t\mapsto x(t)]$ is real analytic with values in $X_{p} $.
\end{theorem}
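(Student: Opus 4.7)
The plan is to exploit the autonomous structure of the system together with maximal parabolic regularity to bootstrap time-smoothness, and then to upgrade $C^\infty$ to real analyticity via the Escher--Pr\"uss--Simonett parameter trick. The starting point is Theorem \ref{th:1}(4), which guarantees $x(\sigma)\in X_p$ with exponential decay for every $\sigma>0$; this lets us restart the Cauchy problem at any positive time from a datum in the stronger space $X_p$, rather than only $X_{p,\mu}$.

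For the $C^\infty$ statement I would proceed by induction on $k$. Since the equation is autonomous and $\phi\in C^\infty(\mathbb{R})$, formal differentiation in $t$ shows that $y:=x'$ solves a linear parabolic system of the same principal form, namely $\partial_t y + \mathcal{L}(x)y = 0$, where $\mathcal{L}(x)$ is the linearization of the original operator at the current solution $x$. The coefficient $a\phi'(\Delta W)$ lies in a space that, thanks to the embedding $X_{p,\mu}\hookrightarrow C(\overline{\Omega})$ provided by $\mu p>n/2$, acts as a smooth multiplier on the regularity class. Since $|x(t)|_{X_p}\le C e^{-\omega t}|x(0)|_{X_{p,\mu}}$ is small for all $t\ge\sigma$, a perturbation argument shows that $\partial_t + \mathcal{L}(x)$ inherits maximal $L_{p,\mu}$-regularity on $J_\sigma$ with the exponential weight $e^{\omega t}$ from the unperturbed operator (which is exponentially stable by the proof of Theorem \ref{th:1}(3)). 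Applying this regularity to the equation for $y$, using as initial value $y(\sigma)=x'(\sigma)\in X_p$ (which is obtained by inspecting the equation itself), yields $y\in e^{-\omega}\mathbb{E}_{1,\mu}(J_{2\sigma})\hookrightarrow e^{-\omega}BUC(J_{2\sigma};X_p)$, hence $y\in e^{-\omega}C_0(J_{2\sigma};X_p)$. Iterating: each further differentiation produces a linear equation for $x^{(k+1)}$ whose right-hand side is a polynomial (via Fa\`a di Bruno) in $x,x',\dots,x^{(k)}$ and derivatives of $\phi$, all already controlled by the inductive hypothesis. Since $\sigma>0$ was arbitrary, this gives $x^{(k)}\in e^{-\omega}C_0(J_\sigma;X_p)$ for every $k$ and $\sigma$.

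For real analyticity, fix $\sigma>0$ and introduce, for $\lambda$ in a complex neighbourhood $U$ of $1$, the scaled function $x_\lambda(t):=x(\lambda t)$ on $J_\sigma$. It satisfies
\begin{equation*}
\partial_t x_\lambda + \lambda A x_\lambda = \lambda N(x_\lambda),\qquad x_\lambda(\sigma)=x(\lambda\sigma),
\end{equation*}
where $A$ is the linear part of the first-order reformulation and $N$ collects the $\phi$-nonlinearity. Consider the map
\begin{equation*}
\Phi:U\times \mathbb{E}_{1,\mu}(J_\sigma)\to \mathbb{E}_{0,\mu}(J_\sigma)\times X_{p,\mu},\qquad \Phi(\lambda,v):=\bigl(\partial_t v + \lambda Av - \lambda N(v),\, v(\sigma)-x(\lambda\sigma)\bigr).
\end{equation*}
Real analyticity of $\phi$ implies analyticity of the superposition operator $v\mapsto N(v)$ on $\mathbb{E}_{1,\mu}$ (again using the Banach-algebra embedding), and analyticity of $\lambda\mapsto x(\lambda\sigma)$ in a small neighbourhood of $1$ follows since $t\mapsto x(t)$ is already $C^\infty$ near $t=\sigma$ with Cauchy-type bounds on the $k$-th derivative (obtained by re-examining the induction above with uniform constants, a standard by-product of maximal regularity). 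The differential $\partial_v\Phi(1,x)$ coincides with the linearization of the equation at $x$, which was shown above to be an isomorphism in the maximal-regularity setting. The analytic implicit function theorem then produces an analytic branch $\lambda\mapsto x_\lambda\in\mathbb{E}_{1,\mu}(J_\sigma)$ extending the solution. Evaluating the embedding $\mathbb{E}_{1,\mu}(J_\sigma)\hookrightarrow C(J_{2\sigma};X_p)$ at a fixed $t_0\in(\sigma,\infty)$, the map $\lambda\mapsto x_\lambda(t_0)=x(\lambda t_0)$ is real analytic near $\lambda=1$, which is exactly real analyticity of $t\mapsto x(t)$ at $t_0$; since $t_0$ and $\sigma$ were arbitrary, analyticity holds on $(0,\infty)$ with values in $X_p$.

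The technical heart of the argument, and its principal obstacle, is the maximal-regularity statement for the linearized operator $\partial_t+\lambda A-\lambda N'(x)$ with the exponential weight $e^{\omega t}$ on the unbounded interval $J_\sigma$. This reduces to showing that the lower-order, time-dependent perturbation $N'(x(t))$ is small in the relevant operator-norm topology uniformly on $J_\sigma$, which uses precisely the exponential decay established in Theorem \ref{th:1}(4) together with Assumption 1. Once this is settled both the induction and the application of the analytic implicit function theorem are essentially formal.
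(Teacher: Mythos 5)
Your overall strategy for the analyticity part (time-rescaling $x_\lambda(t)=x(\lambda t)$ plus the analytic implicit function theorem) is exactly the paper's, but both halves of your argument contain genuine gaps that the paper's version of the parameter trick is specifically designed to avoid.

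For the $C^\infty$ part you replace the parameter trick by direct differentiation of the equation and an induction, restarting the linearized problem at $t=\sigma$ with datum $y(\sigma)=x'(\sigma)$, which you assert lies in $X_p$ ``by inspecting the equation itself.'' This is the gap: from Theorem \ref{th:1} one only knows $x\in W^1_{p,\mu}(J;X_0)\cap L_{p,\mu}(J;X_1)$, so $x(\sigma)\in X_1$ for a.e.\ $\sigma$ and hence $x'(\sigma)=A(x(\sigma))x(\sigma)+f(x(\sigma))\in X_0$ --- not in the trace space $X_p=(X_0,X_1)_{1-1/p,p}$ that maximal regularity on $J_\sigma$ requires for the initial value of $y$. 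Concluding $x'(\sigma)\in X_p$ would need roughly two extra spatial derivatives of $x(\sigma)$, which is not available; so the base case of your induction does not start. The paper circumvents precisely this by the Angenent/Pr\"uss parameter trick: differentiating $\Phi(\lambda)$ at $\lambda=1$ produces $[t\mapsto t\,\partial_t x_*(t)]\in e^{-\omega}\mathbb{E}_{1,\mu}(J)$, i.e.\ the time derivative weighted by $t$, which has \emph{zero} trace at $t=0$ and therefore never requires a well-defined initial value for $x'$; the factor $t^k$ is then harmlessly dropped on $J_\sigma$.

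In the analyticity step you restart at $t=\sigma$ and impose the condition $v(\sigma)-x(\lambda\sigma)=0$. This makes the map $\Phi(\lambda,v)$ depend on $\lambda$ through $x(\lambda\sigma)$, and you need $\lambda\mapsto x(\lambda\sigma)$ to be real analytic with values in $X_{p,\mu}$ --- but that is (an instance of) the very statement being proved. Your attempt to supply it via ``Cauchy-type bounds on the $k$-th derivative, a standard by-product of maximal regularity'' is circular: uniform Cauchy estimates on all derivatives \emph{are} analyticity, and they are not a by-product of the $C^\infty$ induction. The paper's formulation avoids this entirely by anchoring the initial condition at $t=0$, where $x_\lambda(0)=x_*(0)$ is independent of $\lambda$, so $H(\lambda,x)=(\partial_t x+\lambda A(x)x-\lambda f(x),\,x(0)-x_*(0))$ is manifestly (real) analytic in $\lambda$ (indeed affine), and the only analytic ingredient needed is the substitution operator built from $\phi$. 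If you rewrite your argument with the $\lambda$-independent anchoring at $t=0$ and use the rescaling identity $\partial_\lambda x_\lambda(t)=t(\partial_t x_*)(\lambda t)$ to extract the weighted derivatives, both halves close and you recover the paper's proof.
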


\end{section}
\begin{section}{Proof of Theorem \ref{th:1}}
The proof employs  techniques developed in the context of abstract parabolic problems and related maximal regularity.
\subsection
 { Abstract parabolic problems and maximal regularity. }
 Let $X$ be a given Banach space and  $J = [0, T] $ or $J = [0, \infty ) $ and let
 $A: D(A) \subset X \rightarrow X $ be a closed operator
 that is also densely defined .
 Consider an  abstract  Cauchy problem
 \begin{equation}\label{ev}
 u_t  =Au(t)+ f(t) ,\ t \in J,\quad u(0) = u_0.
 \end{equation}

 \begin{definition}
 We say that  $A$  admits  {\it maximal $L_p-regularity $ on $J$ } with some $p \in (1, \infty) $
 iff for each $f \in L_p(J; X ) $ and $u_0 =0 $, problem (\ref{ev}) admits a unique solution $u\in \mathbb{E}(J):=W_p^1(J;X)\cap L_p(J;D_A)$, where $D_A:=(D(A),|\cdot|_A)$.
  \end{definition}
  The space $\mathbb{E}(J)$ is continuously embedded into $BUC(J; \tr\mathbb{E} ) $ where the trace space $\tr\mathbb{E}$ is defined as
  $$ \tr \mathbb{E} = D_A(1-1/p,p ) = (X, D_A)_{1-1/p,p } $$
  and $(\cdot, \cdot )_{\theta, p} $ denotes the real interpolation method.

  \begin{definition}
  We say that the abstract inhomogeneous Cauchy problem {\it admits  maximal $L_p$ regularity},
  if the solution map
  $$(f, u_0 ) \mapsto u $$ is a topological isomorphism
  $$L_p(J;X) \times \tr \mathbb{E} \rightarrow \mathbb{E}(J) \subset BUC(J; \tr \mathbb{E}) $$
  \end{definition}
  In particular, the following estimate holds for operators $A$ with maximal $L_p$ regularity:
  \begin{equation}\label{max}
  |u|_{\mathbb{E}(J)} \leq M(J) ( |f|_{L_p(J;X)} + |u_0|_{\tr\mathbb{E} } ).
  \end{equation}

\subsection{Setting up (\ref{eq_0.1b})-(\ref{eq_0.1a-new}) as an abstract parabolic  problem}

We define \cite{liu-renardy,lt}
$$ U:=W_t,\ Z := \Delta W \ \text{and set}\ x:= ( Z  ,U,\Theta).$$ The differential operator
 $\Delta $, equipped with  zero Dirichlet boundary conditions,
generates an analytic semigroup on $L_p(\Omega) $. With the above notation,
the  original system can be written in the following operator form:
\begin{equation}
\label{eq:1}
x_t = \Delta  \left[\begin{array}{ccc} 0 & 1 & 0 \\ -1 & 0 & 1 \\ 0 & -1 & 1
\end{array} \right] x
- a \Delta   \left[ \begin{array}{c} 0 \\  \phi( Z  ) \\ 0 \end{array}
\right].
\end{equation}
Denoting
\begin{equation}
\label{A}
 A :=  \Delta  \left[\begin{array}{ccc} 0 & 1 & 0 \\ -1 & 0 & 1 \\ 0 & -1 & 1
\end{array}\right] = \Delta M
\end{equation}
where $M$ is $3\times 3 $ nonsingular matrix with eigenvalues having positive real parts.
It is easily  seen  that $A$ is the generator of an  exponentially stable analytic semigroup $e^{At}$ on
$ X_0 :=  L_p(\Omega) \times L_p(\Omega) \times L_p(\Omega) $ and
(\ref{eq:1}) can be rewritten as
\begin{equation}
\label{eq:2}
x_t = Ax + A F(x)
\end{equation}
where
\[
F(x) :=    a \left[ \begin{array}{ccc}   \phi(Z) &0&   0
\end{array}\right]^{\top} .
\]
Equation (\ref{eq:2}) is a nonlinear  abstract parabolic system   defined on $X_0$.
The nonlinearity enters via the generator $A$, and so
solvability of the system must depend  on ``maximal regularity'' properties
\cite{daprato,lunardi,PrSim04}. Since maximal regularity  does not hold within the context
of the $L_{\infty} ( [0, T]; X_0 ) $-topology \cite{lunardi}, one  should consider
the problem  within  the framework of $L_p$-spaces.

%

\medskip

\subsection{Representation as  a quasilinear abstract parabolic system}
Rewriting
$$ \Delta \phi(u) = \phi'(u) \Delta u + \phi''(u) |\nabla u |^2 ,$$
we obtain from  (\ref{eq:2}) that
$$ x_t = A x -a  \left[\begin{array}{ccc}
0,   \phi'(Z) \Delta Z  + \phi''(Z) |\nabla Z |^2 &
 ,0 \end{array}\right]^{\top}. $$
Denoting
\begin{eqnarray}\label{el}
 A(x)  := A - a  \left[\begin{array}{ccc}
0 & 0 & 0 \\  \phi'(Z) \Delta & 0
& 0 \\ 0 & 0& 0  \end{array}\right] = A + B(Z)  ,
\end{eqnarray}
leads  to  the consideration of   a quasilinear system  of the form:
\begin{equation}\label{quasi}
 x_t = A(x) x + f(x),
\end{equation}
where
\[
f(x) \equiv -a  \left[\begin{array}{ccc} 0,&   \phi''(Z) |\nabla Z |^2, & 0
\end{array}\right]^{\top}.
\]
Equation (\ref{quasi}) is a quasilinear abstract parabolic system.
Since $ A = M \Delta $ where $M$ is a real valued $3\times 3 $ matrix with eigenvalues possessing positive real parts, the operator $A$ has maximal parabolic regularity when considered on the space
$L_p(J, X_0 ) $ (see e.g.\ \cite{DHP03}). The interval $J$ can be extended to the positive real axis due to exponential stability of $e^{At}$.
This of  course implies that $ A(0) = A  $ enjoys maximal parabolic regularity on $J = (0, \infty ) = \mathbb{R}_+ $. By \cite{PrSim04} the operator $A$ has the property of maximal parabolic regularity in the weighted $L_p$-spaces
$$L_{p,\mu}(J;X_0):=\{u:J\to X_0:[t\mapsto t^{1-\mu}u(t)]\in L_p(J;X_0)\},$$
where $\mu\in (1/p,1]$. In particular, in \cite{PrSim04} the authors have shown that the problem
$$v_t=Av+f,\ v(0)=v_0$$
has a unique solution
$$v\in W_{p,\mu}^1(J;X_0)\cap L_{p,\mu}(J;X_1)=:\mathbb{E}_{1,\mu}(J)$$
if and only if $f\in L_{p,\mu}(J;X_0)=:\mathbb{E}_{0,\mu}(J)$ and
\begin{align*}
v_0\in X_{p,\mu}:&=(X_0,D(A))_{\mu-1/p,p}\\
&=\begin{cases}[W_p^{2(\mu-1/p)}(\Omega)]^3,\ &\text{if}\ 1<\mu p<3/2,\\
\{u\in [W_p^{2(\mu-1/p)}(\Omega)]^3:u|_{\partial\Omega}=0\},\ &\text{if}\ \mu p>3/2.
\end{cases}
\end{align*}
Moreover the estimate
$$|v|_{\mathbb{E}_{1,\mu}(J)}\le C(|f|_{\mathbb{E}_{0,\mu}(J)}+|v_0|_{X_{p,\mu}})$$
holds for some constant $C>0$.

Let $s(A)<0$ be the spectral bound of $A$
and let $f\in e^{-\omega}L_{p,\mu}(J;X_0)$ as well as $v_0\in X_{p,\mu}$ be given. Consider the problem
\begin{equation}\label{step2:eq1}
v_t=Av+f,\ v(0)=v_0
\end{equation}
in $e^{-\omega}L_{p,\mu}(J;X_0)$. The scaled function $u(t)=e^{\omega t}v(t)$ then solves the problem
\begin{equation}\label{step2:eq2}
u_t=(A+\omega)u+e^{\omega t}f,\ u(0)=v_0.
\end{equation}
Note that $s(A+\omega)=s(A)+\omega<0$ if $\omega\in [0,-s(A))$. Since by assumption $e^{\omega t}f\in L_{p,\mu}(J;X_0)$ and $v_0\in X_{p,\mu}$ it follows that there exists a unique solution $u\in \mathbb{E}_{1,\mu}(J)$ of \eqref{step2:eq2}. But this in turn implies that there exists a unique solution $v\in e^{-\omega}\mathbb{E}_{1,\mu}(J)$ of problem \eqref{step2:eq1} satisfying the estimate
 $$|v|_{e^{-\omega}\mathbb{E}_{1,\mu}(J)}\le C(|f|_{e^{-\omega}\mathbb{E}_{0,\mu}(J)}+|v_0|_{X_{p,\mu}}).$$
In other words we have shown that the operator $A$ has maximal parabolic regularity in the weighted spaces $e^{-\omega}L_{p,\mu}(J;X_0)$ as long as $\omega\in [0,-s(A))$ and $\mu\in (1/p,1]$.

The above allows to consider system (\ref{quasi}) within this maximal regularity framework.
In order to  be able to use maximal regularity theory we need to verify several assumptions regarding the operator $A(x) $ and the forcing term $f(x) $. This is done below.
\subsection{Supporting estimates}
We shall present several estimates which will be used later for the proof of main theorems.

\begin{lemma}\label{lem:estimates}
Let $ p > \frac{n+2}{2} $, $\mu\in (\frac{n+2}{2p},1]$ and $\omega\ge 0$ . Then
\begin{enumerate}
\item
The map $ (V,x) \mapsto \phi'(V)  \Delta x $ takes $$ e^{-\omega}BUC(J, W^{2(\mu-1/p)}_p(\Omega) )\times e^{-\omega}L_{p,\mu}(J, X_1 ) \rightarrow e^{-\omega}L_{p,\mu}(J;X_0))).$$
\item
The map $ ( V,Z)  \mapsto \phi'' (V) (\nabla V  \cdot \nabla Z) $ takes  $$e^{-\omega}BUC(J, W^{2(\mu-1/p)}_p(\Omega)
\times e^{-\omega}L_{p,\mu}(J, W^{2}_p(\Omega) ) \rightarrow
   e^{-\omega}L_{p,\mu}(J, L_p(\Omega) ) $$
\end{enumerate}
\end{lemma}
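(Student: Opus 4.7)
The plan is to exploit the continuous embedding $W_p^{2(\mu-1/p)}(\Omega)\hookrightarrow L_\infty(\Omega)$, which is strict precisely under the hypothesis $\mu>(n+2)/(2p)$ (equivalently $2(\mu-1/p)>n/p$). This gives a uniform-in-$t$ bound $\sup_{t\in J}e^{\omega t}\|V(t)\|_{L_\infty(\Omega)}<\infty$ for any $V\in e^{-\omega}BUC(J;W_p^{2(\mu-1/p)}(\Omega))$. For both assertions I would first derive a pointwise-in-$t$ spatial estimate, and then pass to the weighted $L_{p,\mu}$-norm in time, treating $V$ as a bounded (indeed exponentially decaying) multiplier.

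For assertion (1): By Assumption 1, $\phi'$ is locally Lipschitz with $\phi'(0)=0$, so the Nemytskii operator $V\mapsto \phi'(V)$ sends bounded sets of $L_\infty(\Omega)$ into bounded sets, yielding $\|\phi'(V(t))\|_{L_\infty(\Omega)}\le C$ uniformly in $t$ (with $C$ depending on $\|V\|_{e^{-\omega}BUC}$). H\"older's inequality then gives the pointwise-in-$t$ estimate
\[
\|\phi'(V(t))\Delta x(t)\|_{L_p(\Omega)}\le C\|\Delta x(t)\|_{L_p(\Omega)},
\]
and $\|\Delta x(t)\|_{L_p}$ is controlled by the norm of $x(t)$ in $X_1$. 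Multiplying by the weight $t^{1-\mu}e^{\omega t}$ and taking the $L_p(J)$-norm in $t$ closes the bilinear estimate and proves the mapping property.

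For assertion (2): The delicate step is the spatial bound of $\nabla V\cdot\nabla Z$ in $L_p(\Omega)$. I would split via H\"older as $\|\nabla V\cdot\nabla Z\|_{L_p}\le \|\nabla V\|_{L_q}\|\nabla Z\|_{L_r}$ with $\tfrac{1}{q}+\tfrac{1}{r}=\tfrac{1}{p}$, and choose $r$ so that $W^1_p(\Omega)\hookrightarrow L_r(\Omega)$ holds; in the typical case $p<n$, take $r=np/(n-p)$, forcing $q=n$. The required embedding $W_p^{2(\mu-1/p)-1}(\Omega)\hookrightarrow L_n(\Omega)$ then reduces, by the usual Sobolev scaling, to $2(\mu-1/p)\ge n/p$, which is exactly the assumed threshold $\mu\ge(n+2)/(2p)$. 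Together with the uniform $L_\infty$-bound on $\phi''(V(t))$ furnished by Assumption 1, this yields
\[
\|\phi''(V(t))\,\nabla V(t)\cdot\nabla Z(t)\|_{L_p(\Omega)}\le C\,\|V(t)\|_{W_p^{2(\mu-1/p)}(\Omega)}\|Z(t)\|_{W_p^2(\Omega)}.
\]

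I expect the main obstacle to be the H\"older--Sobolev bookkeeping in (2): verifying that the threshold $\mu>(n+2)/(2p)$ is exactly critical, and uniformly handling the cases $p<n$, $p=n$ (where $W^1_p\not\hookrightarrow L_\infty$, but one can take $r<\infty$ arbitrarily large), and $p>n$ (where one can take $r=\infty$ and everything becomes trivial). Once the spatial bound is in hand, passage to the time-weighted norm $e^{-\omega}L_{p,\mu}(J;L_p(\Omega))$ is routine: the $e^{-\omega}BUC$-membership of $V$ supplies a uniform multiplicative constant $M=\sup_{t\ge 0}e^{\omega t}\|V(t)\|_{W_p^{2(\mu-1/p)}}$, so that
\[
t^{1-\mu}e^{\omega t}\|\phi''(V(t))\nabla V(t)\cdot\nabla Z(t)\|_{L_p}\le CM\,e^{-\omega t}\cdot t^{1-\mu}e^{\omega t}\|Z(t)\|_{W_p^2},
\]
and the $e^{-\omega t}\le 1$ factor is absorbed, giving the claim. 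The analogous reduction works for (1) with $\Delta x$ in place of $Z$.
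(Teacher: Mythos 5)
Your proposal is correct and follows essentially the same route as the paper: part (1) via the embedding $W_p^{2(\mu-1/p)}(\Omega)\hookrightarrow L_\infty(\Omega)$ (valid since $2(\mu-1/p)>n/p$) making $\phi'(V)$ a bounded multiplier, and part (2) via the H\"older--Sobolev splitting $\|\nabla V\cdot\nabla Z\|_{L_p}\le\|\nabla V\|_{L_n}\|\nabla Z\|_{L_{np/(n-p)}}$ with the embedding $W_p^{2\mu-1-2/p}(\Omega)\hookrightarrow L_n(\Omega)$ pinned exactly at the threshold $\mu>(n+2)/(2p)$, including the same case distinction $p<n$, $p=n$ (where the paper likewise uses the strict inequality to gain an $\varepsilon$ of integrability), and $p>n$. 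The passage to the weighted-in-time norms is handled the same way in both arguments.
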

\begin{proof}
\begin{enumerate}
\item For $ p > \frac{n+2}{2} $ and $\mu\in (\frac{n+2}{2p},1]$ one has
$2( \mu- 1/p) - \frac{n}{p} > 0 $, hence
$$ W_p^{2(\mu-1/p)}(\Omega) \hookrightarrow L_{\infty}(\Omega) $$
Therefore  $\phi'(V) $ is a multiplier on $e^{-\omega}L_{p,\mu}(J, X_0) $. This along with the boundedness of
$\Delta : X_1 \rightarrow X_0 $ proves the claim.

\item Since we already know that $\phi''(V) $ is in $ e^{-\omega}BUC(J, L_{\infty}(\Omega) ) $ it suffices to analyze the mapping $ (V,Z)\mapsto  \nabla V
\cdot \nabla Z $. Our aim is to show that
\begin{equation}\label{eq-ZV}
  \nabla V \cdot \nabla Z \in e^{-\omega}L_{p,\mu} (J;L_p( \Omega))
  \end{equation}
   for
$V \in e^{-\omega}BUC(J, W^{2(\mu-1/p)}_p(\Omega) )$ and $Z \in e^{-\omega}L_{p,\mu}(J, W^{2}_p(\Omega)) $.\\
or alternatively
\begin{equation}\label{eq-ZV1}
\nabla V \in e^{-\omega}BUC(J, W^{2(\mu-1/p)-1}_p(\Omega) ),~ and ~\nabla Z \in e^{-\omega}L_{p,\mu}(J, W^{1}_p(\Omega))
\end{equation}

Applying H\"{o}lder's inequality with $r, \bar{r}$  exponents  yields
\begin{align}\label{H}
\begin{split}
\int_0^T \int_{\Omega} &e^{\omega pt}|\nabla Z|^p |\nabla V |^p t^{(1-\mu)p}dxdt\le\\ &\leq
|\nabla V|^{p}_{ L_{\infty} (J, L_{p \bar{r}}(\Omega))}   \int_0^T e^{\omega pt}|\nabla Z|^p_{L_{pr}(\Omega)}t^{(1-\mu)p}dt.
\end{split}
\end{align}
The choice of H\"{o}lder's exponent  will depend on the relation between $p$ and $n$.
Since $\mu>\frac{n+2}{2p}$, Sobolev's embeddings imply
\begin{equation}\label{Sob}
W^{2\mu-1-2/p}_p(\Omega)\hookrightarrow L_n(\Omega)
\end{equation}
Moreover
\begin{equation}\label{Sob1}
 W_p^1(\Omega)\hookrightarrow  \left\{\begin{array}{ccc}
 L_{np/(n-p)}(\Omega) &p < n \\ L_{\infty}(\Omega) & p > n\\
 L_q(\Omega) , q\in [1,\infty) & p =n  \end{array} \right.
\end{equation}
If $p<n$,we set $r=n/(n-p)$ and $\bar{r}=n/p$. Conversely, if $p>n$ then we choose $r=\infty$ and $\bar{r}=1$

In case $p=n$ we use (\ref{Sob}), (\ref{Sob1}) and the strict inequality for $\mu$. This yields $W_n^{2\mu-1-2/n}(\Omega)\hookrightarrow L_{n+\varepsilon}(\Omega)$ for a sufficiently small $\varepsilon>0$. Defining $\bar{r}:=(n+\varepsilon)/n>1$ and $q=pr=p\bar{r}/(\bar{r}-1)$ we finally obtain the desired estimate
\begin{align*}
\int_0^T \int_{\Omega} &e^{\omega pt}|\nabla Z|^{p}  |\nabla V |^pt^{(1-\mu)p} dxdt \\
&\leq
| V|^{p}_{ L_{\infty} (J, W_p^{2(\mu-1/p)}(\Omega)) }   \int_0^T e^{\omega pt}| Z|^p_{ W_p^2(\Omega)}t^{(1-\mu)p}dt,
\end{align*}
valid for all $p>1+n/2$ and $\mu\in (\frac{n+2}{2p},1]$.
\end{enumerate}
\end{proof}

\subsection{Solvability of a linear non-autonomous auxiliary problem}
  We begin with an auxiliary lemma which provides  solvability for the  linear equation with variable time and space coefficients.   The coefficients are assumed to be sufficiently smooth (in line with maximal parabolic regularity) and also of sufficiently small variation. The corresponding result is given below.

\begin{lemma}\label{lem:aux}
Let $ p > \frac{n+2}{2} $, $\mu\in (\frac{n+2}{2p},1]$, $\omega\in [0,-s(A))$ and\\ $V\in e^{-\omega} BUC(J;W_p^{2(\mu-1/p)}(\Omega))$ such that $\|V\|_{e^{-\omega}BUC(J;W_p^{2(\mu-1/p)})}\le\rho$. Then there exists $\rho_0>0$ such that for all $\rho\in (0,\rho_0)$ the linear problem
\begin{equation}\label{eq:2a}
 x_t = A(V) x + f(V,x) ,\ x(0) =x_0 \in X_{p,\mu}
 \end{equation}
with
$$ A(V) = A + B(V) , ~~B(V) = - a  \left[\begin{array}{ccc}
0 & 0 & 0 \\  \phi'(V) \Delta & 0
& 0 \\ 0 & 0& 0  \end{array}\right],$$
$$f(V,x) = -a [ 0, \phi'' (V) \nabla V \cdot \nabla Z , 0 ]^T $$
has a unique solution $x=(Z,U,\theta)\in e^{-\omega}\mathbb{E}_{1,\mu}(J)$ which satisfies the estimate
$$|x|_{e^{-\omega}\mathbb{E}_{1,\mu}}\le [C(\rho_0) +c]|x_0|_{X_{p,\mu}}.$$
where $C(\rho_0) \rightarrow 0 $ when $\rho_0 \rightarrow 0 $.
\end{lemma}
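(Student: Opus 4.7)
My plan is to solve \eqref{eq:2a} by a Neumann series / contraction argument based on the maximal parabolic regularity of the unperturbed operator $A$ in the weighted spaces $e^{-\omega}L_{p,\mu}(J;X_0)$, which has already been established in the excerpt for $\omega\in [0,-s(A))$. For a fixed admissible $V$, both $B(V)x$ and $f(V,x)$ are linear in $x$, so I would rewrite the equation as $x_t-Ax=K_V x$, $x(0)=x_0$, where $K_V:e^{-\omega}\mathbb{E}_{1,\mu}(J)\to e^{-\omega}\mathbb{E}_{0,\mu}(J)$ is the linear operator
$$K_V y:=B(V)y+f(V,y)=-a\bigl[0,\ \phi'(V)\Delta Z_y+\phi''(V)\nabla V\cdot\nabla Z_y,\ 0\bigr]^{\top},$$
with $Z_y$ the first component of $y$. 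By maximal regularity, the affine map $\Phi:y\mapsto x$ solving $x_t-Ax=K_V y$, $x(0)=x_0$, sends $e^{-\omega}\mathbb{E}_{1,\mu}(J)$ into itself with
$$|\Phi(y_1)-\Phi(y_2)|_{e^{-\omega}\mathbb{E}_{1,\mu}(J)}\le c\,|K_V(y_1-y_2)|_{e^{-\omega}\mathbb{E}_{0,\mu}(J)},$$
where $c$ is the maximal-regularity constant of $A$. The problem therefore reduces to proving that $|K_V|_{\mathcal{B}(e^{-\omega}\mathbb{E}_{1,\mu},e^{-\omega}\mathbb{E}_{0,\mu})}\le C(\rho)$ with $C(\rho)\to 0$ as $\rho\to 0$.

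The core step is a quantitative version of Lemma \ref{lem:estimates}. Since $\phi\in C^{3-}$ with $\phi'(0)=\phi''(0)=0$, Taylor's formula yields $|\phi'(s)|\le c_1|s|^2$ and $|\phi''(s)|\le c_2|s|$ for $|s|$ bounded. Combined with the embedding $W_p^{2(\mu-1/p)}(\Omega)\hookrightarrow L_\infty(\Omega)$ (which holds under the hypotheses $p>(n+2)/2$, $\mu>(n+2)/(2p)$) and the trivial fact $e^{\omega t}\ge 1$ on $J$, this gives uniformly in $t$
$$|\phi'(V(t))|_{L_\infty(\Omega)}\le c\rho^2,\qquad |\phi''(V(t))|_{L_\infty(\Omega)}\le c\rho.$$
Inserting the first bound into the estimate of Lemma \ref{lem:estimates}(1) controls the $B(V)$-contribution by $c\rho^2\,|y|_{e^{-\omega}\mathbb{E}_{1,\mu}}$. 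For the $f(V,\cdot)$-contribution I would apply Lemma \ref{lem:estimates}(2), where the inequality underlying its proof yields $|\nabla V\cdot\nabla Z_y|_{e^{-\omega}L_{p,\mu}(J;L_p)}\le C\rho\,|y|_{e^{-\omega}\mathbb{E}_{1,\mu}}$ (the extra $\rho$ coming from $|V|_{BUC(J;W_p^{2(\mu-1/p)})}\le\rho$), and then multiplying by the $L_\infty$-bound for $\phi''(V)$ supplies a further factor of $\rho$. Combining, $|K_V y|_{e^{-\omega}\mathbb{E}_{0,\mu}}\le C\rho^2\,|y|_{e^{-\omega}\mathbb{E}_{1,\mu}}$, so one may take $C(\rho):=C\rho^2$.

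Now choose $\rho_0$ so small that $cC(\rho_0)\le 1/2$. Then $\Phi$ is a strict contraction on $e^{-\omega}\mathbb{E}_{1,\mu}(J)$ for every $V$ with $|V|_{e^{-\omega}BUC(J;W_p^{2(\mu-1/p)})}\le\rho<\rho_0$, and its unique fixed point is the desired solution $x\in e^{-\omega}\mathbb{E}_{1,\mu}(J)$. Applying maximal regularity to the fixed point itself gives
$$|x|_{e^{-\omega}\mathbb{E}_{1,\mu}(J)}\le c\,|K_V x|_{e^{-\omega}\mathbb{E}_{0,\mu}(J)}+c\,|x_0|_{X_{p,\mu}}\le cC(\rho)\,|x|_{e^{-\omega}\mathbb{E}_{1,\mu}(J)}+c\,|x_0|_{X_{p,\mu}},$$
and absorbing the first term on the left produces the estimate of the claimed form $[C(\rho_0)+c]\,|x_0|_{X_{p,\mu}}$ (after relabeling constants). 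The main obstacle I anticipate is the bookkeeping of the exponential weight $e^{-\omega t}$ through the nonlinear composition $\phi'(V)$, $\phi''(V)$: one must not try to track the decay of $V$ into the Nemytskii factors but instead control them in the unweighted $L_\infty$-norm, letting the vanishing conditions $\phi'(0)=\phi''(0)=0$ supply the requisite powers of $\rho$ that yield contractivity.
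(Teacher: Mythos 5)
Your proposal is correct and follows essentially the same route as the paper: both arguments rest on the maximal parabolic regularity of $A$ in $e^{-\omega}\mathbb{E}_{1,\mu}(J)$ together with the smallness of the perturbation $y\mapsto B(V)y+f(V,y)$, which is extracted exactly as you do from $\phi'(0)=\phi''(0)=0$, the embedding $W_p^{2(\mu-1/p)}(\Omega)\hookrightarrow L_\infty(\Omega)$, and the bilinear estimate of Lemma \ref{lem:estimates}, followed by a Neumann-series/contraction step. The only (cosmetic) difference is that the paper first homogenizes the initial data by writing $x=e^{At}x_0+w$ and inverts the perturbed problem for $w$ with zero trace, whereas you keep $x_0$ inside an affine fixed-point map; the resulting estimate $[C(\rho_0)+c]\,|x_0|_{X_{p,\mu}}$ is obtained the same way in both cases.
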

\begin{proof}
We first solve the problem
$$v_t=Av,\ v(0)=x_0$$
in $e^{-\omega}\mathbb{E}_{0,\mu}(J)$. This yields a solution $v = e^{At} x_0 \in e^{-\omega}\mathbb{E}_{1,\mu}(J)$ satisfying the estimate
\begin{equation}\label{v}
|v|_{e^{-\omega}\mathbb{E}_{1,\mu}}\le C|x_0|_{X_{p,\mu}}
\end{equation}
 Our next step is to homogenize the equation with respect to the initial data. For this we introduce change of variable
$w: = x-v $ , so that $w|_{t=0}=0$. Then the
sought after solution $x$  can be expressed as $ x: = w+v $ where $w$  solves
\begin{equation}\label{eq:2b}
 w_t = A(V) w + f(V,w)+g ,\ w(0) =0,
 \end{equation}
with  $g:=-B(V)v-f(V,v)\in e^{-\omega}\mathbb{E}_{0,\mu}(J)$ being a given function.
The regularity $g \in e^{-\omega}\mathbb{E}_{0,\mu}(J)$ follows directly from Lemma \ref{lem:estimates}.
Thus, our goal is reduced to establishing  well-posedness of (\ref{eq:2b}) .
 Writing  $A(V)=A+B(V)$, where $A$ has maximal parabolic regularity in $e^{-\omega}\mathbb{E}_{0,\mu}(J)$, we  may rewrite  linear equation in $w$  given in \eqref{eq:2b} as
$$w=(\partial_t-A)^{-1}[B(V)w+f(V,w)+g]$$
in the space $e^{-\omega}\,_0\mathbb{E}_{1,\mu}(J)$. By Lemma \ref{lem:estimates} and
 maximal parabolic regularity  of $A$, which then implies invertibility of  $(\partial_t-A)^{-1}$ from $e^{-\omega}\mathbb{E}_{0,\mu} $ into $ e^{-\omega}\mathbb{E}_{1,\mu} $, we obtain the estimate
\begin{align*}
|(\partial_t-A)^{-1}[B(V)w&+f(V,w)]|_{e^{-\omega}\mathbb{E}_{1,\mu}}\\
&\le C \Big(|\phi'(V)|_{L_\infty(L_\infty)}+\\
&\hspace{1cm}+|\phi''(V)|_{L_\infty(L_\infty)}
|V|_{e^{-\omega}BUC(W_p^{2(\mu-1/p)}(\Omega))}\Big)|w|_{e^{-\omega}\mathbb{E}_{1,\mu}}\\
&\le C|V|_{e^{-\omega}BUC(W_p^{2(\mu-1/p)}(\Omega))}^s|w|_{e^{-\omega}\mathbb{E}_{1,\mu}}\\
&\le C\rho^s|w|_{e^{-\omega}\mathbb{E}_{1,\mu}},
\end{align*}
 where $s > 0 $ by the assumption imposed on $\phi$. Therefore, if $\rho\in (0,\rho_0)$ and $\rho_0>0$ is sufficiently small, a Neumann series argument yields the statement. Recall  that equation for $w$ is linear ( $w\mapsto f(V,w)$ is  also linear).
 The above and maximal regularity  imply the estimate for $w$
 $$|w|_{e^{-\omega}\mathbb{E}_{1,\mu}} \leq C |g|{e^{-\omega}\mathbb{E}_{0,\mu}} \leq C \rho^s |v|_{e^{-\omega}\mathbb{E}_{1,\mu}}  \leq C \rho^s |x_0|_{X_{p, \mu}} $$
 The above estimate along with the estimate in (\ref{v}) leads to the final conclusion in the Lemma.
\end{proof}

\subsection{Analysis of nonlinear equation and completion of the proof}
 We shall apply Banach's fixed point theorem.
Let $\rho_0>0$ from the preceding lemma and
$$\cW:=\{W\in e^{-\omega}BUC(J;X_{p,\mu}):W(0)=x_0\ \text{and}\ |W|_{e^{-\omega}BUC(J;X_{p,\mu})}\le\rho\},$$
$\rho\in (0,\rho_0)$. For $W=(W_1,W_2,W_3)\in\cW$ define $\cT (W)=x$ to be the unique solution of \eqref{eq:2a}, where $V=W_1\in e^{-\omega}BUC(J;\tilde{X}_{p,\mu})$ and $\tilde{X}_{p,\mu}:=(L_p(\Omega),D(\Delta_D))_{\mu-1/p,p}$.

Note that $\cT$ is well-defined by Lemma \ref{lem:aux} and we have the estimate
\begin{equation}\label{eq:3}
|\cT(W)|_{e^{-\omega}\mathbb{E}_{1,\mu}}=|x|_{e^{-\omega}\mathbb{E}_{1,\mu}}\le [C(\rho_0) + c]|x_0|_{X_{p,\mu}}.
\end{equation}
From now on we assume that $|x_0|_{X_{p,\mu}}\le\delta$. This yields
$$|\cT(W)|_{e^{-\omega}BUC(J;X_{p,\mu})}\le M|\cT(W)|_{e^{-\omega}\mathbb{E}_{1,\mu}}\le M[C(\rho_0)+ c]\delta.$$
Here $M\ge 1$ denotes the embedding constant from $e^{-\omega}\mathbb{E}_{1,\mu}\hookrightarrow e^{-\omega}BUC(J;X_{p,\mu})$. Therefore, if $\delta=\rho/(M (C(\rho_0) + c) )$, it follows that $\cT(\cW)\subset \cW$.

 We shall now show that $\cT$ is a contraction on $\cW$. Let $W,\bar{W}\in\cW$ and $x=\cT(W)$, $\bar{x}=\cT(\bar{W})$. By the proof of Lemma \ref{lem:aux} we have
$$x-\bar{x}=(\partial_t-A)^{-1}[B(V)x-B(\bar{V})\bar{x}+f(V,x)-f(\bar{V},\bar{x})],$$
since $(x-\bar{x})(0)=0$. It follows that
$$|x-\bar{x}|_{e^{-\omega}\mathbb{E}_{1,\mu}}\le C(|B(V)x-B(\bar{V})\bar{x}|_{e^{-\omega}\mathbb{E}_{0,\mu}}
+|f(V,x)-f(\bar{V},\bar{x})|_{e^{-\omega}\mathbb{E}_{0,\mu}}).$$
For the first term on the right side we estimate as follows.
\begin{align*}
|B(V)x&-B(\bar{V})\bar{x}|_{e^{-\omega}\mathbb{E}_{0,\mu}}\le |B(V)(x-\bar{x})|_{e^{-\omega}\mathbb{E}_{0,\mu}}
+|(B(V)-B(\bar{V}))\bar{x}|_{e^{-\omega}\mathbb{E}_{0,\mu}}\\
&\le C[\rho^s|x-\bar{x}|_{e^{-\omega}\mathbb{E}_{1,\mu}}
+\rho|V-\bar{V}|_{e^{-\omega}BUC(J;\tilde{X}_{p,\mu})}
|\Delta\bar{x}|_{e^{-\omega}\mathbb{E}_{0,\mu}}]\\
&\le C[\rho^s|x-\bar{x}|_{e^{-\omega}\mathbb{E}_{1,\mu}}
+\rho|V-\bar{V}|_{e^{-\omega}BUC(J;\tilde{X}_{p,\mu})}
|\bar{x}|_{e^{-\omega}\mathbb{E}_{1,\mu}}]\\
&\le C[\rho^s|x-\bar{x}|_{e^{-\omega}\mathbb{E}_{1,\mu}}
+\rho^2 |V-\bar{V}|_{e^{-\omega}BUC(J;\tilde{X}_{p,\mu})}]\\
&\le C[\rho^s|x-\bar{x}|_{e^{-\omega}\mathbb{E}_{1,\mu}}+ \rho^2
|W-\bar{W}|_{e^{-\omega}BUC(J;X_{p,\mu})}]
\end{align*}
for some $s>0$, since $|V|_{e^{-\omega}BUC(J;\tilde{X}_{p,\mu})},|\bar{V}|_{e^{-\omega}BUC(J;\tilde{X}_{p,\mu})},
|\bar{x}|_{e^{-\omega}\mathbb{E}_{1,\mu}}\le \rho$ and $\tilde{X}_{p,\mu}\hookrightarrow L_\infty(\Omega)$. In a similar way we obtain
$$|f(V,x)-f(\bar{V},\bar{x})|_{e^{-\omega}\mathbb{E}_{0,\mu}}\le C\rho^s\left(|x-\bar{x}|_{e^{-\omega}\mathbb{E}_{1,\mu}}
+|W-\bar{W}|_{e^{-\omega}BUC(J;X_{p,\mu})}\right),$$
for some $s>0$, by Assumption 1.
Since
$$|\cT(W)-\cT(\bar{W})|_{e^{-\omega}BUC(J;X_{p,\mu})}\le M|\cT(W)-\cT(\bar{W})|_{e^{-\omega}\mathbb{E}_{1,\mu}}=
M|x-\bar{x}|_{e^{-\omega}\mathbb{E}_{1,\mu}}$$
it follows that $\cT$ is a strict contraction on $\cW$ provided that $\rho>0$ is sufficiently small. The contraction mapping principle yields a unique fixed point $x_*\in\cW$ of $\cT$, i.e.\ $\cT(x_*)=x_*$. By construction of $\cT$, the fixed point $x_*$ is the unique solution of \eqref{quasi} in $e^{-\omega}\mathbb{E}_{1,\mu}$. Moreover $x_*$ satisfies
$$|x_*|_{e^{-\omega}C_0(J;X_{p,\mu})}\le M_1|x_*|_{e^{-\omega}\mathbb{E}_{1,\mu}(J)}\le M_1[C(\rho_0) +c] |x_0|_{X_{p,\mu}},$$
as well as
\begin{multline*}
|x_*|_{e^{-\omega}C_0(J_\sigma;X_{p})}\le M_2|x_*|_{e^{-\omega}\mathbb{E}_{1}(J_\sigma)}\le\\
\le M_2\sigma^{\mu-1}|x_*|_{e^{-\omega}\mathbb{E}_{1,\mu}(J)}\le M_2\sigma^{\mu-1}[C(\rho_0) +c] |x_0|_{X_{p,\mu}},
\end{multline*}
where $J_\sigma=[\sigma,\infty)$ for some $\sigma>0$ and $J=J_0$. Here the constant $M_1>0$ comes from the embedding (see \cite{PrSim04})
$$e^{-\omega}\mathbb{E}_{1,\mu}(J)\hookrightarrow e^{-\omega}C_0(J;X_{p,\mu})$$
and the constant $M_2>0$ does not depend on $\sigma>0$. This can be seen as follows.
\begin{align*}
|x_*|_{e^{-\omega}\mathbb{E}_{1}(J_\sigma)}^p&=\int_\sigma^\infty e^{\omega p t}|x_*(t)|_{X_1}^pdt+\int_\sigma^\infty e^{\omega p t}|\dot{x}_*(t)|_{X_0}^pdt\\
&=e^{\omega p\sigma}\left(\int_0^\infty e^{\omega p \tau}|x_*(\tau+\sigma)|_{X_1}^pd\tau+\int_0^\infty e^{\omega p \tau}|\dot{x}_*(\tau+\sigma)|_{X_0}^pd\tau\right)\\
&=e^{\omega p\sigma}|T(\sigma)x_*|_{e^{-\omega}\mathbb{E}_{1}(J)}^p\ge\frac{1}{M_2^p}[\sup_{\tau\ge 0}e^{\omega p(\tau+\sigma)}|x_*(\tau+\sigma)|_{X_p}]^p\\
&=\frac{1}{M_2^p}[\sup_{t\ge \sigma}e^{\omega pt}|x_*(t)|_{X_p}]^p
\end{align*}
where $[T(\sigma)f](\tau):=f(\tau+\sigma)$, $\tau,\sigma\ge 0$, is the semigroup of left-translations and $M_2>0$ denotes the embedding constant associated to
$$e^{-\omega}\mathbb{E}_{1}(J)\hookrightarrow e^{-\omega}C_0(J;X_p).$$
This yields the estimates
$$|x_*(t)|_{X_{p,\mu}}\le M_1[C(\rho_0) +c] e^{-\omega t}|x_0|_{X_{p,\mu}},\quad t\ge 0,$$
and
$$|x_*(t)|_{X_{p}}\le M_2\sigma^{\mu-1}[C(\rho_0) +c] e^{-\omega t}|x_0|_{X_{p,\mu}},\quad t\ge \sigma>0,$$
valid for all $|x_0|_{X_{p,\mu}}\le\delta$. It follows that $x_*(t)\to 0$ in $X_{p}$ as $t\to\infty$ at an exponential rate and the trivial equilibrium of \eqref{quasi} is exponentially stable in $X_{p,\mu}$ for $\mu \in (\frac{n+2}{2p},1] $. This proves assertion (1), (3) \& (4).

If in addition $p>(n+4)/2$, $\mu\in (\frac{n+4}{4p}+\frac{1}{2},1]$ and $\phi'(s)\ge 0$ for all $s\in\mathbb{R}$ and $x_0$ is not necessarily small in $X_{p,\mu}$, then one can show that there exists a possibly small $T=T(x_0)>0$ such that (\ref{eq_0.1b})-(\ref{eq_0.1a-new}) has a unique solution
$$ ( \Delta W , W_t, \Theta)\in [ L_{p,\mu}(J; W^{2}_p(\Omega))]^3 \cap [ W^{1}_{p,\mu}(J, L_p(\Omega) ) ]^3 \cap [BUC (J, W^{2\mu- 2/p}_p(\Omega)
) ]^3.$$
$J=[0,T]$. This follows from the lines of the proof of \cite[Theorem 2.1]{KPW10}, hence assertion (2) follows.
\end{section}

\section{Proof of Theorem \ref{thm:2}}
The proof of Theorem \ref{thm:2} follows from  Theorem \ref{th:1} and a  suitable application of the implicit function theorem (see \cite{Deim85}), which gives both differentiability and analyticity of the nonlinear flow. The parameter trick which will be applied below goes back to \cite{Ang90} and in the context of maximal regularity it has been applied e.g.\ in \cite{Pr02}.

\subsection{Differentiability of solutions}
We will show that
$$[x\mapsto (A(x),f(x))]\in C^1(e^{-\omega}\mathbb{E}_{1,\mu}(J);e^{-\omega}BUC(J;\mathcal{L}(X_1,X_0))\times e^{-\omega}\mathbb{E}_{0,\mu}(J)).$$
where $$A (x) = A  +  B(Z)\ \text{and}\ f(x) = -a [ 0, \phi''(Z) |\Delta Z|^2 , 0 ]^{\sf T} $$
To this end, let $\phi$ satisfy Assumption 1 and, in addition, assume that $\phi\in C^3(\mathbb{R})$. The natural candidate for $f'(x_*)x$ is
$$f'(x_*)x=-a
\begin{bmatrix}
0 \\ \phi'''(Z_*)Z|\nabla Z_*|^2+2\phi''(Z_*)(\nabla Z_*,\nabla Z) \\ 0
\end{bmatrix}.
$$
We have
$$f(x_*+x)-f(x_*)-f'(x_*)x=-a[0,f_1(Z,Z_*)+f_2(Z,Z_*)+f_3(Z,Z_*),0]^{\sf T},$$
where
$$f_1(Z,Z_*):=(\phi''(Z_*+Z)-\phi''(Z_*)-\phi'''(Z_*)Z)|\nabla Z_*|^2$$
$$f_2(Z,Z_*):=2(\nabla Z_*|\nabla Z)(\phi''(Z_*+Z)-\phi''(Z_*))$$
and $$f_3(Z,Z_*):=\phi''(Z_*+Z)|\nabla Z|^2$$ Since $\phi\in C^3(\mathbb{R})$, it is easy to check the desired $C^1$-property for $[x\mapsto f(x)]$ with the help of Lemma \ref{lem:estimates}. In the same way (which is actually easier) one can show that $[x\mapsto A(x)]$ with $A(x):=A+B(x)$ is $C^1$ with
$$[B'(x_*)x]x_*=-a
\begin{bmatrix}
0 \\ \phi''(Z_*)Z\Delta Z_*\\0
\end{bmatrix}$$
Let $x_*(t)$ be the solution according to Theorem \ref{th:1}. We introduce a new function $x_\lambda(t):=x_*(\lambda t)$ for $\lambda\in (1-\epsilon,1+\epsilon)$ and $t\in J$. It follows that $\partial_t x_\lambda=\lambda(\partial_t x_*)(\lambda t)$, hence
$$\partial_t x_\lambda(t)+\lambda A(x_\lambda(t))x_\lambda(t)=\lambda f(x_\lambda (t)),\ t\in J,\quad x_\lambda(0)=
x_(0).$$
Define a mapping $H:(1-\epsilon,1+\epsilon)\times e^{-\omega}\mathbb{E}_{1,\mu}(J)\to e^{-\omega}\mathbb{E}_{0,\mu}(J)\times X_{p,\mu}$ by
$$H(\lambda,x)=(\partial_t x+\lambda A(x)x-\lambda f(x),x(0)-x_*(0))$$
Note that $H(1,x_*)=0$, $H\in C^1((1-\epsilon,1+\epsilon)\times e^{-\omega}\mathbb{E}_{1,\mu}(J))$ and
$$D_x H(\lambda,x_*)x=(\partial_t x+\lambda [A'(x_*)x]x_*+\lambda A(x_*)x-\lambda f'(x_*)x,x(0)),$$
by the differentiability properties of $A$ and $f$. This yields
$$D_x H(1,x_*)x=(\partial_t x+[B'(x_*)x]x_*+A(x_*)x-f'(x_*)x,x(0)),$$
where as before $A(x)=A+B(x)$. We already know that $$|x_*|_{e^{-\omega}\mathbb{E}_{1,\mu}(J)}\le C|x_*(0)|_{X_{p,\mu}}$$
by \eqref{eq:3}. This yields
\begin{align*}
|[B'(x_*)x]x_*|_{e^{-\omega}\mathbb{E}_{0,\mu}(J)}&\le a|\phi''(Z_*)Z\Delta Z_*|_{e^{-\omega}\mathbb{E}_{0,\mu}(J)}\\
&\le a|\phi''(Z_*)|_{e^{-\omega} BUC(J\times\Omega)}|Z|_{e^{-\omega} BUC(J\times\Omega)}|Z_*|_{e^{-\omega}\mathbb{E}_{1,\mu}(J)}\\
&\le C|x_*|_{e^{-\omega}BUC(J;X_p)}^s|x|_{e^{-\omega}\mathbb{E}_{1,\mu}(J)}
|x_*|_{e^{-\omega}\mathbb{E}_{1,\mu}(J)}\\
&\le C\delta^{1+s}|x|_{e^{-\omega}\mathbb{E}_{1,\mu}(J)},
\end{align*}
if $|x_*(0)|_{X_{p,\mu}}\le\delta$. Similarly one can show that
$$|B(x_*)x-f'(x_*)x|_{e^{-\omega}\mathbb{E}_{0,\mu}(J)}\le C\delta^s|x|_{e^{-\omega}\mathbb{E}_{1,\mu}(J)}.$$
It follows that if $\delta>0$ is sufficiently small, then the linear operator
$$[x\mapsto [B'(x_*)x]x_*+B(x_*)x-f'(x_*)x]$$
is a small perturbation of $[x\mapsto Ax]$. A Neumann series argument as in Lemma \ref{lem:aux} yields that the operator $D_xH(1,x_*):e^{-\omega}\mathbb{E}_{1,\mu}(J)\to e^{-\omega}\mathbb{E}_{0,\mu}(J)\times X_{p,\mu}$ is an isomorphism. Therefore, by the implicit function theorem (see e.g.\ Theorem 15.1 in \cite{Deim85}), we obtain a $C^1$-mapping $\Phi:(1-\eta,1+\eta)\to e^{-\omega}\mathbb{E}_{1,\mu}(J)$ such that $H(\lambda,\Phi(\lambda))=0$ for all $\lambda\in (1-\eta,1+\eta)$ and $\Phi(1)=x_*$. By uniqueness it follows that
$x_{\lambda}(t) = x_{*}(\lambda t ) $ agrees with $\Phi(\lambda )(t) $, hence  $x_\lambda=\Phi(\lambda)\in e^{-\omega}\mathbb{E}_{1,\mu}(J)$ is $C^1$ in $\lambda\in (1-\eta,1+\eta)$ with derivative
$\partial_\lambda x_\lambda(t)=t(\partial_tx_*)(\lambda t)$. Evaluating at $\lambda=1$ yields $[t\mapsto t\partial_tx_*(t)]\in e^{-\omega}\mathbb{E}_{1,\mu}(J)$, hence $\partial_t (t x_*)=t\partial_t x_*+x_*\in e^{-\omega}\mathbb{E}_{1,\mu}(J)\hookrightarrow e^{-\omega}C_0(J;X_{p,\mu})$. If we restrict ourselves to intervals $J_\sigma:=[\sigma,\infty)$, $\sigma>0$, we may drop the factor $t$ to obtain
$$x_* \in e^{-\omega}[W_p^2(J_\sigma;X_0)\cap W_p^1(J_\sigma;X_1)\cap C_0^1(J_\sigma;X_{p})].$$
\subsection{Higher order differentiability}
If $\phi\in C^{k+2}(\mathbb{R})$, $k\in\mathbb{N}\cup\{\infty\}$ then one can show that $A$ and $f$ are $C^k$, hence $H$ is $C^k$. Therefore, a corollary of the implicit function theorem (see e.g.\ Corollary 15.1 in \cite{Deim85}) implies that $\Phi$ is $C^k$, hence $[\lambda \mapsto x_\lambda=\Phi(\lambda)]$ is $C^k$ and then, by computing reiterated derivatives, we obtain  $[t\mapsto t^k x_*^{(k)}(t)]\in e^{-\omega}\mathbb{E}_{1,\mu}(J)$ for all $k\in \mathbb{N}$. Consequently, we obtain
$$x_* \in e^{-\omega}[W_p^{k+1}(J_\sigma;X_0)\cap W_p^k(J_\sigma;X_1)\cap C_0^k(J_\sigma;X_{p})].$$
\subsection{Analyticity}
If $\phi$ is even real analytic, then $A$ and $f$ are real analytic and then so is $H$. The real analytic version of the implicit function theorem (see e.g.\ \cite[Theorem 15.3]{Deim85}) yields that $\Phi$ is real analytic, hence $[\lambda\mapsto x_\lambda]$ is real analytic. Let $t_0>0$ be fixed and define $e(x):=x(t_0)$. It is easy to see that $e\in\mathcal{L}(e^{-\omega}\mathbb{E}_{1}(J_\sigma);X_{p})$, hence
$[\lambda\mapsto x_\lambda(t_0)=x_*(\lambda t_0)]$ is real analytic.
But since this is true for any $t_0>0$, we obtain that $x_*$ is real analytic for all $t>0$ with values in $X_{p}$.



\medskip

\end{document}